\documentclass[10pt]{article}
\usepackage{amsmath,amssymb}
\textwidth 6.5in\oddsidemargin 0in
\textheight 9in\topmargin -0.5in

\newtheorem{thm}{Theorem}[section]
\newtheorem{co}[thm]{Corollary}
\newtheorem{lem}[thm]{Lemma}
\newtheorem{pr}[thm]{Proposition}
\newcommand{\N}{\mathbb{N}}
\newcommand{\R}{\mathbb{R}}
\newcommand{\Z}{\mathbb{Z}}

\newcommand{\ode}{\mathrm{od}}
\newcommand{\vol}{\mathrm{Vol}\,}

\newcommand{\od}{\mathrm{od}\,}
\newcommand{\tr}{\mathrm{tr}\,}
\newcommand{\diag}{\mathrm{diag}\,}

%\newcommand{\ord}{\mbox{ord}}
%\newcommand{\eqr}[1]{~\mbox{$(${\rm \ref{#1}}$)$}}
%\newcommand{\vier}[4]{\left[ \begin{array}{cc}
%                   #1 & #2 \\ #3 & #4 \end{array} \right]}
%\newcommand{\Section}[1]{\section{#1}}

%%% AMS math stuff.
\newcommand{\openbox}{\leavevmode
  \hbox to.77778em{%
    \hfil\vrule
  \vbox to.675em{\hrule width.6em\vfil\hrule}%
  \vrule\hfil}}
\newcommand{\proofname}{Proof}

\newenvironment{proof}[1][\proofname]{\par\normalfont
  \trivlist\item[\hskip\labelsep\itshape #1:]\ignorespaces
  }{\hspace*{1cm}\hspace*{\fill}\openbox \medskip\endtrivlist}

%%% End of AMS math stuff

\title{Some Inequalities Related to the Seysen Measure of a Lattice}%
\date{\today}
\author{G\'erard Maze\\
{\small {\em e-mail:\/} gmaze@math.uzh.ch \vspace{-1mm} }\\
{\small Mathematics Institute\vspace{-1mm}}\\
{\small University of Z\"urich\vspace{-1mm}}\\
{\small Winterthurerstr 190, CH-8057 Z\"urich, Switzerland }
\vspace{3mm} }

\begin{document}\maketitle
\thispagestyle{empty}
\begin{abstract}
  Given a lattice $L$, a basis $B$ of $L$ together with its dual
  $B^*$, the orthogonality measure $S(B)=\sum_i ||b_i||^2 ||b_i^*||^2$
  of $B$ was introduced by M. Seysen \cite{seysen1} in 1993. This
  measure (the Seysen measure in the sequel, also known as the {\it
    Seysen metric} \cite{zhang}) is at the heart of the Seysen lattice
  reduction algorithm and is linked with different geometrical
  properties of the basis \cite{ling1, ling2, seysen2, zhang}. In this
  paper, we derive different expressions for this measure as well as
  new inequalities related to the Frobenius norm and the condition
  number of a matrix.
\end{abstract}
\vspace{3mm}

\noindent{\bf Key Words:} Lattice, orthogonality defect, Seysen
measure, HGA inequality\\
\noindent{\bf Subject Classification:} Primary 11H06, Secondary 15A42,
11-04

\vspace{3mm}

%%%%%%%%%%%%%%%%%%%%%%%%%%%%%%%%%%%%%%%%%%%%%%%%%%%%%%%%%%%%%%%%%%%
\section{Introduction, Notations and Previous Results}

An $n$-dimensional (real) lattice $L$ is defined as a subset of
$\R^m$, $n\leq m$, generated by $B = [b_1 | \ldots | b_n]^t$, where
the $b_i$ are $n$ linearly independent vectors over $\R$ in $\R^m$, as
\[
L=\left\{ \sum_{i=1}^n a_i b_i \, | \, a_i \in \Z\right\}.
\]

In this paper, the rows of the matrix $B$ span the lattice. Any other
matrix $B'= U B$, where $U \in GL_n(\Z)$, generates the same
lattice. The volume $\vol L$ of $L$ is the well defined real number
$(\det BB^t)^{1/2}$. The dual lattice of $L$ is defined by the basis
$B^*= \left( B^{+} \right)^t$, where $B^+$ is the Moore-Penrose
inverse, or pseudo-inverse, of $B$. If $B^* = [b^*_1 | \ldots |
  b^*_n]^t$, then since $B B^+ = I_n$, we have $\langle b_i,b^*_j
\rangle = \delta_{i,j} $. Lattice reduction theory deals with the
problem of identifying and computing bases of a given lattice whose
vectors are {\it short} and {\it almost orthogonal}. There are several
concepts of reduced bases, such as the concepts of Minkovsky reduced,
LLL reduced \cite{lenstra} and Korkin-Zolotarev reduced basis
\cite{korkine}. In 1990, Hastad and Lagarias \cite{hastad} proved that
in all lattices of full rank (i.e., when $n=m$), there exists a basis
$B$ such that both $B$ and $B^*$ consist in relatively short vector,
i.e., $\max_i ||b_i||\cdot||b^*_i|| \leq \exp(O(n^{1/3}))$. In 1993,
Seysen \cite{seysen1} improved this upper bound to $\exp(O(\ln^2(n)))$
and suggested to use the expression $S(B):=\sum_i ||b_i||^2
||b_i^*||^2$. This definition also allowed him to define a new concept
of reduction: a basis $B$ of $L$ is Seysen reduced if $S(B)$ is
minimal among all bases of $L$ (see also \cite{lamacchia} for a study
of this reduction method). A relation between the orthogonality defect
\cite{kaltofen,zhang}
\[
\od(B) := 1-\frac{\det B B^t}{\prod_{i=1}^n ||b_i||^2} \in [0,1]
\]
and the Seysen measure $S(B)$ is given in \cite{zhang} where the
following bounds can be found:
\begin{eqnarray}
n \, \leq \,S(B)  & \leq & \frac{n}{1-\ode(B)}, \label{zhang1}\\
0 \, \leq \,\ode(B) & \leq & 1- \frac{1}{\left(S(B)-n+1
  \right)^{n-1}}. \label{zhang2}  
\end{eqnarray}
Clearly, the smaller the Seysen measure is, the closer to orthogonal
the basis is, showing that the Seysen measure describes the quality of
the angle behavior of the vectors in a basis. The length of the
different vectors are nevertheless not part of the direct information
given by the measure, but Inequality \ref{zhang2} gives
\[
\prod_{i=1}^n ||b_i|| \leq \left(S(B)-n+1 \right)^{\frac{n-1}{2}}
\cdot \vol L 
\]
which in turn provides the inequality
\begin{eqnarray}\label{size_bound}
\min_i ||b_i|| \leq \left(S(B)-n+1 \right)^{(n-1)/2n} \left( \vol
L \right)^{1/n}.
\end{eqnarray}
Note that such a type of inequality appears in the context of lattice
reduction as
\[
\begin{array}{rl}
\min_i ||b_i|| \leq \sqrt{n} \left( \vol
L \right)^{1/n} & \mbox{for Korkin Zolotarev and Minkovsky reduced
  bases}\\
\min_i ||b_i|| \leq (4/3)^{(n-1)/4} \left( \vol
L \right)^{1/n} & \mbox{for LLL reduced bases.}\\
\end{array}
\] 

In this paper, we start by revisiting Seysen's bound
$\exp(O(\ln(n)^2))$ by computing the hidden constant in Landau's
notation. Then we present new expressions for the Seysen measure,
connecting the measure with the condition number and the Frobenius
norm of a matrix and allowing us to improve some of the existing
bounds. We will from now on suppose that $m=n$, since Equality
\ref{sin} below shows that the Seysen measure is invariant under
isometric embeddings.

\section{Explicit Constant in Seysen's Bound}

We show in this section that the hidden constant in Seysen's bound
$\exp(O(\ln(n)^2))$ can be upper bounded by $1+\frac{2}{\ln 2}$. The
proof is not new, but revisits some details in the original proof of
Seysen \cite[Theorem 7]{seysen1} by using explicit bounds given in
\cite[Proposition 4.2]{lenstra}. Let us define the two main
ingredients of the proof. First, if $N(n,\R)$ and $N(n,\Z)$ are the
group of lower triangular unipotent $n \times n$ matrices over $\R$
and $\Z$ respectively (i.e. matrices with 1 in the diagonal), then
following \cite{hastad} and \cite{seysen1}, and if $||X||_{\infty} =
\max_{i,j} |X_{ij}|$, we define $S(n)$ for all $n \in \N$ by
\[
S(n) = \sup_{A\in N(n,\R)} \left( \inf_{T\in N(n,\Z)}
\max(||TA||_{\infty},||(TA)^{-1}||_{\infty}) \right).
\]
In \cite{seysen1}, the author proves that $S(2n) \leq S(n)\cdot
\max(1,n/2)$, and concludes that $S(n) = \exp(O((\ln n)^2))$. We would
like to point out that the latter is not true in general, unless some
other property of the function $S$ is invoked. Indeed, an arbitrary
map $s$ defined on the set of odd integers, e.g. $s(2n+1)=\exp(2n+1)$,
and extended to $\N$ with the rule $s(2n) = n/2 \cdot s(n)$ satisfies
the condition $s(2n) \leq s(n)\cdot \max(1,n/2)$ but we have $s(n)
\neq \exp(O((\ln n)^2))$ in general. This point seems to have been
overlooked in \cite{seysen1}. However, in our case, we have the
following in addition.
\begin{lem}
$\forall n\leq m \in \N, S(n) \leq S(m)$
\end{lem}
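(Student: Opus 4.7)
The plan is to establish the inequality by embedding, in a norm-preserving way, any $A \in N(n,\R)$ into $N(m,\R)$ and by showing that every admissible unimodular $T' \in N(m,\Z)$ applied to this embedding restricts to an admissible $T \in N(n,\Z)$ that is at least as good for $A$.

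Concretely, fix $A \in N(n,\R)$ and consider the block matrix
\[
\tilde A = \begin{pmatrix} A & 0 \\ 0 & I_{m-n}\end{pmatrix} \in N(m,\R).
\]
For any $T' \in N(m,\Z)$, write it in block form compatible with the split $m = n + (m-n)$:
\[
T' = \begin{pmatrix} T_{11} & 0 \\ T_{21} & T_{22} \end{pmatrix},
\]
where $T_{11} \in N(n,\Z)$ and $T_{22} \in N(m-n,\Z)$ since $T'$ is lower triangular with unit diagonal. A direct block multiplication gives
\[
T'\tilde A = \begin{pmatrix} T_{11}A & 0 \\ T_{21}A & T_{22}\end{pmatrix}, \qquad (T'\tilde A)^{-1} = \begin{pmatrix} (T_{11}A)^{-1} & 0 \\ * & T_{22}^{-1}\end{pmatrix},
\]
where the last identity comes from $(T'\tilde A)^{-1} = \tilde A^{-1}(T')^{-1}$ together with the standard block-triangular inversion formula. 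Since the infinity norm of a matrix is bounded below by the infinity norm of any of its blocks, we conclude
\[
\max\bigl(\|T'\tilde A\|_\infty,\|(T'\tilde A)^{-1}\|_\infty\bigr) \;\geq\; \max\bigl(\|T_{11}A\|_\infty,\|(T_{11}A)^{-1}\|_\infty\bigr).
\]

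Taking the infimum over $T' \in N(m,\Z)$ on the left and using that the map $T' \mapsto T_{11}$ is surjective onto $N(n,\Z)$ (any $T \in N(n,\Z)$ is $T_{11}$ for $T' = \mathrm{diag}(T, I_{m-n})$), the right-hand side has infimum exactly $\inf_{T \in N(n,\Z)} \max(\|TA\|_\infty,\|(TA)^{-1}\|_\infty)$; hence
\[
\inf_{T \in N(n,\Z)} \max\bigl(\|TA\|_\infty,\|(TA)^{-1}\|_\infty\bigr) \;\leq\; \inf_{T' \in N(m,\Z)} \max\bigl(\|T'\tilde A\|_\infty,\|(T'\tilde A)^{-1}\|_\infty\bigr) \;\leq\; S(m).
\]
Taking the supremum over $A \in N(n,\R)$ yields $S(n) \leq S(m)$, as desired.

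There is essentially no technical obstacle here; the only point that demands a line of verification is the block-triangular inversion formula, which ensures the top-left block of $(T'\tilde A)^{-1}$ really is $(T_{11}A)^{-1}$ rather than some Schur complement correction, a fact that follows cleanly from the zero block in the upper-right corner of both $T'$ and $\tilde A$.
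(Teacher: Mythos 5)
Your proof is correct. It uses the same underlying idea as the paper---embed $A$ into the larger group by block-diagonal padding with an identity block---but the execution is genuinely different and, in one respect, more complete. The paper reduces to attained minimizers $T_A$, embeds one dimension at a time via $A\mapsto\diag(1,A)$, and proves the \emph{equality} $\max(||i(TA)||_{\infty},||i(TA)^{-1}||_{\infty})=\max(||TA||_{\infty},||(TA)^{-1}||_{\infty})$; but the step it then needs---that the infimum over the larger group $N(n+1,\Z)$ acting on the embedded matrix cannot fall below the infimum over $N(n,\Z)$ acting on $A$---is exactly the point your block decomposition of an arbitrary $T'\in N(m,\Z)$ makes explicit: $T'\tilde A$ and $(T'\tilde A)^{-1}$ each contain $T_{11}A$ and $(T_{11}A)^{-1}$ as principal blocks with $T_{11}\in N(n,\Z)$, so no $T'$ can beat the best $n$-dimensional $T$. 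Your route also dispenses with the existence of minimizers entirely (working directly with infima) and handles $n\leq m$ in one step rather than by iterating $n\to n+1$. The only inessential flourish is the surjectivity remark about $T'\mapsto T_{11}$: for the inequality you need only that $T_{11}$ lies in $N(n,\Z)$, so the one-sided bound suffices; surjectivity merely upgrades it to an equality of infima. What the paper's version buys in exchange is the pointwise norm \emph{equality} under the embedding, which is slightly stronger information than your one-sided block bound, though it is not needed for the monotonicity statement.
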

\begin{proof}
It is not difficult to see that for all $A \in N(n,\R)$, there
exists a matrix $T_A\in N(n,\Z)$ such that
\[
 \inf_{T\in N(n,\Z)}
\max(||TA||_{\infty},||(TA)^{-1}||_{\infty}) =
\max(||T_AA||_{\infty},||(T_AA)^{-1}||_{\infty}). 
\]
See the Remark following Definition 4 of \cite{seysen1} for the
details. As a consequence, in order to prove the lemma, it is
sufficient to show that
\begin{equation}\label{sup}
\sup_{A\in N(n,\R)} \max(||T_AA||_{\infty},||(T_AA)^{-1}||_{\infty})
\leq \sup_{A'\in N(n+1,\R)}
\max(||T_{A'}A'||_{\infty},||(T_{A'}A')^{-1}||_{\infty}).
\end{equation}
Let us consider the map $i$ from $N(n,\R)$ to $N(n+1,\R)$ defined by
mapping a matrix $A$ to the block matrix $\diag (1,A)$. The map $i$ is
a group homomorphism and thus
$i(A)^{-1}=i(A^{-1})=\diag(1,A^{-1})$. We claim that for all $A \in
N(n,\R)$ and all $T\in N(n,\Z)$, we have
\begin{equation}\label{max}
\max(||i(TA)||_{\infty},||i(TA)^{-1}||_{\infty}) =
\max(||TA||_{\infty},||(TA)^{-1}||_{\infty}).
\end{equation}
First, if $\max(||i(TA)||_{\infty},||i(TA)^{-1}||_{\infty})=1 $, then
the above equality is straightforward, due to the definition of
$||.||_{\infty}$. Let us then consider the case where the maximum is
not 1.  Notice that since $||X||_{\infty} \geq 1$ is true for all
matrix $X$ in $N(m,\R)$, we have that
$\max(||X||_{\infty},||X^{-1}||_{\infty}) \geq 1$ and so
$\max(||i(TA)||_{\infty},||i(TA)^{-1}||_{\infty})>1 $. As a
consequence the maximum in $\max(||i(TA)||_{\infty},
||i(TA)^{-1}||_{\infty})$ is achieved by one of the entries of $i(TA)$
or $i(TA)^{-1}$, and this entry cannot be the one in the upper left
corner. The maximum is then the same for both sides of
(\ref{max}). This proves the above claim. Now, since
\[
\sup_{A'\in N(n+1,\R)}
\max(||T_{A'}A'||_{\infty},||(T_{A'}A')^{-1}||_{\infty}) \geq
\max(||i(TA)||_{\infty},||i(TA)^{-1}||_{\infty}) =
\max(||TA||_{\infty},||(TA)^{-1}||_{\infty}),
\]
is true for all $A\in N(n,\R)$, taking the supremum on the left
hand side, we see that Inequality \ref{sup} is correct.
\end{proof}

This lemma makes the following inequalities valid:
\begin{eqnarray*}
S(n) = S(2^{\log_2n}) \leq S(2^{\lceil \log_2n \rceil}) \leq 2^{\lceil
  \log_2n \rceil-2} \cdot 2^{ \lceil \log_2n \rceil-3} \cdot \ldots
\cdot 2 \cdot 1 \leq \exp \left( \frac{(\ln n)^2}{2 \ln 2}\right).
\end{eqnarray*}
The second ingredient we need is related to the Korkin-Zolotarev
reduced bases of a lattice $L$. Such bases are well known, see
e.g. \cite{lenstra}, and one of their properties is the following: if
$B$ is a Korkin-Zolotarev reduced basis of $L$, and if $B=HK$, where
$H=(h_{ij})$ is a lower triangular matrix and $K$ is an orthogonal
matrix, then for all $1\leq i\leq j \leq n$, we have
\[
h_{jj}^2 > h_{ii}^2 (j-i+1)^{-1-\ln(j-i+1)}.
\]
This is a direct consequence of \cite[Proposition 4.2]{lenstra} and
the fact that the concept of Korkin-Zolotarev reduction is
recursive. See \cite{seysen1} for the details. In \cite{seysen1}, the
author concludes that $\frac{h_{ii}^2}{h_{jj}^2} = \exp(O((\ln n)^2))$
but we have the more precise statement that
\[
\frac{h_{ii}^2}{h_{jj}^2} \leq \exp((\ln (j-i+1))^2+\ln (j-i+1)) \leq
\exp((\ln n)^2+\ln n).
\]
Let us now revisit the proof of \cite[Theorem 7]{seysen1} by making
use of the previous inequalities. This theorem states that for every
lattice $L$ there is a basis $\tilde{B} = [\tilde{b_1} | \ldots |
  \tilde{b_n}]^t$ with reciprocal basis $\tilde{B}^* = [\tilde{b_1}^*
  | \ldots | \tilde{b_n}^*]^t$ which satisfies
\[
||\tilde{b_i}|| \cdot ||\tilde{b_i}^*|| \leq \exp(c_2 (\ln n)^2)
\] 
for all $i$ and for a fixed $c_2$, independent of $n$. We explicit
now an upper bound for the constant $c_2$. Given a lattice $L$ and a
Korkin-Zolotarev reduced basis $B=HK$ as above, the proof of
\cite[Theorem 7]{seysen1} shows that there exists a
basis $\tilde{B}$, constructed from $B$, such that
\[
||\tilde{b_i}||^2 \cdot ||\tilde{b_i}^*||^2 \leq n^2 \cdot \max_{k\geq
  j}\left\{ \frac{h_{jj}^2}{h_{kk}^2} \right\} \cdot S(n)^4
\]
Making use of the previous inequalities, we can write
\[
||\tilde{b_i}||^2 \cdot ||\tilde{b_i}^*||^2 \leq n^2 \cdot\exp((\ln
n)^2+\ln n) \cdot \exp \left( \frac{4(\ln n)^2}{2 \ln 2}\right) = \exp
\left( \left(\frac{2}{\ln 2}+1\right) (\ln n)^2 + 3 \ln n\right).
\]
which shows that $c_2 < \frac{1}{\ln 2}+\frac{1}{2} + \frac{3}{2\ln n}
< \frac{1}{\ln 2}+\frac{1}{2} + \frac{3}{2\ln 2} = \frac{5}{2\ln
  2}+\frac{1}{2}$ and gives the following proposition:
\begin{pr}\label{explicit}
For every lattice $L$ there is a basis $B$ which satisfies
\[
S(B) \leq \exp \left( \left(\frac{2}{\ln 2}+1\right) (\ln n)^2 + 4 \ln
n \right).
\]
\end{pr}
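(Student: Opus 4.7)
The plan is to assemble the bound directly from the ingredients already prepared in the discussion leading up to the statement. Start from an arbitrary lattice $L$ and fix a Korkin--Zolotarev reduced basis $B = HK$ of $L$, with $H = (h_{ij})$ lower triangular and $K$ orthogonal. The construction of Theorem 7 of \cite{seysen1} produces, from $B$, a basis $\tilde{B}$ of $L$ for which
\[
||\tilde{b_i}||^2 \cdot ||\tilde{b_i}^*||^2 \leq n^2 \cdot \max_{k\geq j}\left\{ \frac{h_{jj}^2}{h_{kk}^2}\right\} \cdot S(n)^4.
\]
This is the single structural input borrowed from \cite{seysen1}; I would invoke it as a black box, without re-deriving the construction.

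Next I would plug in the two explicit bounds established earlier in this section. The Korkin--Zolotarev property gives $h_{jj}^2/h_{kk}^2 \leq \exp((\ln n)^2 + \ln n)$ uniformly in $j\leq k$, while the lemma together with the iteration $S(2n) \leq S(n)\cdot\max(1,n/2)$ yields $S(n) \leq \exp((\ln n)^2/(2\ln 2))$, so $S(n)^4 \leq \exp(2(\ln n)^2/\ln 2)$. Combining these with the factor $n^2 = \exp(2\ln n)$ gives, for every $i$,
\[
||\tilde{b_i}||^2 \cdot ||\tilde{b_i}^*||^2 \leq \exp\!\left(\left(\tfrac{2}{\ln 2} + 1\right) (\ln n)^2 + 3\ln n\right),
\]
which is precisely the bound made explicit just before the statement of the proposition.

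The last step is to pass from the per-index bound to $S(\tilde{B}) = \sum_{i=1}^n ||\tilde{b_i}||^2 \cdot ||\tilde{b_i}^*||^2$. Since there are $n$ identical upper bounds, summing contributes a factor $n = \exp(\ln n)$, which is absorbed into the lower-order term and upgrades the $3\ln n$ to $4\ln n$:
\[
S(\tilde{B}) \leq n \cdot \exp\!\left(\left(\tfrac{2}{\ln 2}+1\right)(\ln n)^2 + 3\ln n\right) = \exp\!\left(\left(\tfrac{2}{\ln 2}+1\right)(\ln n)^2 + 4\ln n\right).
\]
There is essentially no analytic obstacle here: the whole proof is a bookkeeping exercise that gathers the three quantitative estimates (the Korkin--Zolotarev ratio bound, the bound on $S(n)$, and Seysen's per-vector inequality) and tracks the extra $\ln n$ coming from the sum. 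The only thing one must be careful about is confirming that the input inequality from \cite[Theorem 7]{seysen1} does not itself hide constants incompatible with the explicit exponents above; once that is checked, the proposition follows.
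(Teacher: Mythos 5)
Your proposal is correct and follows exactly the paper's own route: it combines the explicit bound $S(n)\leq \exp((\ln n)^2/(2\ln 2))$, the Korkin--Zolotarev ratio bound $h_{jj}^2/h_{kk}^2 \leq \exp((\ln n)^2+\ln n)$, and the per-vector inequality from \cite[Theorem 7]{seysen1}, then sums over the $n$ indices to convert the $3\ln n$ into $4\ln n$. This is precisely the bookkeeping the paper performs in the discussion immediately preceding the proposition.
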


%%%%%%%%%%%%%%%%%%%%%%%%%%%%%%%%%%%%%%%%%%%%%%%%%%%%%%%%%%%%%%%%%%%
\section{Explicit Expression for the Seysen Measure}

In this section, we present different expressions for the Seysen
measure. First, let us recall the following known expression for the
measure. Given a basis $B$ of $L$, by definition of $B^*$, for all $0
\leq j \leq n$, the vector $b_j^*$ is orthogonal to $L_j$, where $L_j$
is the sublattice of $L$ generated by all the vectors of $B$ except
$b_j$. If $\beta_j$ is the angle between $b_j$ and $b_j^*$ and
$\alpha_j$ is the angle between $b_j$ and $L_j$, we have $\cos^2
\beta_i=\sin^2 \alpha_i$ and
\begin{equation}\label{sin}
  S(B) = \sum_i ||b_i||^2||b_i^*||^2 = \sum_i \frac{\langle b_i,b^*_i
  \rangle^2 }{\cos^2 \beta_i} = \sum_i \frac{1}{\sin^2 \alpha_i}.
\end{equation}
This has already been used in \cite{lamacchia,zhang}. We introduce now
the following new representation, which can be used to define the
Seysen measure without any references to the dual basis:
\begin{pr}
For every lattice $L$, if $B = [b_1 | \ldots | b_n]^t$ is a basis of
$L$ with $B= D \cdot V$ where $D=\diag(||b_1||,\ldots,||b_n||)$, then
\[
S(B) = ||V^{-1}||^2
\]
where $||.||$ is the Frobenius norm, i.e., $||X||=\sqrt{\sum_{i,j} |x_{ij}|^2}$.
\end{pr}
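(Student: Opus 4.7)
The plan is to unpack the definition of the dual basis $B^*$ in the factorization $B = DV$ and then read off $\|b_i\|^2\|b_i^*\|^2$ as the squared norm of a column of $V^{-1}$.

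Since we have reduced to the case $m=n$, the matrix $B$ is square and invertible, so $B^+ = B^{-1}$ and the defining relation $B^* = (B^+)^t$ becomes $B^* = (B^{-1})^t$. Substituting $B = DV$ gives $B^{-1} = V^{-1} D^{-1}$, and because $D$ is diagonal we obtain
\[
B^* \;=\; (V^{-1} D^{-1})^t \;=\; D^{-1} (V^{-1})^t.
\]
Reading off the $i$-th row of this identity, the defining equation $B^* = [b_1^*|\ldots|b_n^*]^t$ shows that $b_i^{*t}$ equals $\|b_i\|^{-1}$ times the $i$-th row of $(V^{-1})^t$, i.e.\ the transpose of the $i$-th column of $V^{-1}$. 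Equivalently,
\[
b_i^* \;=\; \|b_i\|^{-1} \cdot (V^{-1})_{:,i},
\]
where $(V^{-1})_{:,i}$ denotes the $i$-th column of $V^{-1}$.

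Taking squared Euclidean norms gives
\[
\|b_i^*\|^2 \;=\; \|b_i\|^{-2} \sum_{j=1}^n |(V^{-1})_{j,i}|^2,
\]
so that $\|b_i\|^2\|b_i^*\|^2$ is exactly the squared norm of the $i$-th column of $V^{-1}$. Summing over $i$ collects all entries of $V^{-1}$, yielding $S(B) = \sum_i \|b_i\|^2 \|b_i^*\|^2 = \|V^{-1}\|^2$, which is the claimed identity.

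The main thing to get right is simply the row/column bookkeeping, since the paper's convention is that the lattice vectors appear as rows of $B$, so the dual vectors $b_i^*$ show up as columns (rather than rows) of $V^{-1}$ after stripping off the diagonal scaling. There is no real analytic obstacle; once the identity $B^* = D^{-1}(V^{-1})^t$ is established, the rest is a direct expansion of the Frobenius norm.
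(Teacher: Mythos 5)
Your proof is correct, but it takes a genuinely different and more direct route than the paper. The paper never touches the dual basis explicitly: it sets $M=BB^t$, writes $\|V^{-1}\|^2=\tr\bigl(V^{-1}(V^{-1})^t\bigr)=\tr(D^2M^{-1})=\sum_i\|b_i\|^2(M^{-1})_{i,i}$, then evaluates the diagonal entries of $M^{-1}$ via the cofactor formula, identifies $\det M^{i,i}$ with $(\vol L_i)^2$, and invokes the volume factorization $\vol L=\|b_i\|\cdot\vol L_i\cdot\sin\alpha_i$ to land on the angular expression $S(B)=\sum_i 1/\sin^2\alpha_i$ from Equality (3.1). You instead just unpack the definition $B^*=(B^{-1})^t$ in the square, invertible case $m=n$ (which the paper has already assumed by this point), obtaining $B^*=D^{-1}(V^{-1})^t$, so that $b_i^*$ is $\|b_i\|^{-1}$ times the $i$-th column of $V^{-1}$ and the Frobenius norm identity falls out by summing squared column norms. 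Your argument is shorter, self-contained (it does not lean on the earlier identity (3.1)), and the row/column bookkeeping is handled correctly; what it gives up is the geometric byproduct of the paper's computation, namely the explicit link between the diagonal entries of $M^{-1}$, the sublattice volumes $\vol L_i$, and the angles $\alpha_i$, which the paper reuses implicitly in its later corollaries. Both arguments are valid proofs of the stated proposition.
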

\begin{proof}
Let $M=BB^t$. Using $||X||^2 = \tr(XX^t)$ and $\tr(ABC)=\tr(CAB)$, we
have
\[
||V^{-1}||^2 = \tr(V^{-1}(V^{-1})^{t}) = \tr(D^2M^{-1}) = \sum_i
||b_i||^2 \cdot \left( M^{-1} \right)_{i,i}.
\]
Since $M^{-1}=\frac{1}{\det M} \mbox{comat}(M)$, where
$\mbox{comat}(M)$ is the comatrix of $M$, we have 
\[
\left( M^{-1}
\right)_{i,i} = \frac{1}{\det M} \mbox{comat }(M)_{i,i} =  \frac{\det
  M^{i,i}}{\det M}
\]
where $ M^{i,i}$ is the square matrix obtained from $M$ by deleting
the $i$-th row and the $i$-th column of $M$. So if $B^i$ is the matrix
obtained by deleting the $i$-th row of $B$, we have
\[
\det M^{i,i} = \det B^i (B^i)^t = \left( \vol L_i \right)^2
\]
which gives
\[
\frac{\det M^{i,i}}{\det M} = \frac{\left( \vol L_i \right)^2}{\left(
  \vol L \right)^2} = \frac{\left( \vol L_i \right)^2}{\left(
  ||b_i||\cdot \vol L_i \cdot \sin \alpha_i \right)^2} =
\frac{1}{||b_i||^2 \sin^2 \alpha_i}.
\]
Finally,
\[
||V^{-1}||^2 = \sum_i ||b_i||^2 \cdot \left( M^{-1} \right)_{i,i} =
 \sum_i ||b_i||^2 \cdot  \frac{1}{||b_i||^2 \sin^2 \alpha_i} = S(B).
\]
\end{proof}
Another way of looking at the previous result is with the help of the
(Frobenius) condition number of an invertible matrix $X$ which is
defined as $\kappa(X)=||X|| \cdot ||X^{-1}||$.
\begin{co}
With the above notation, we have $S(B)=\frac{\kappa(V)^2}{n}$.
\end{co}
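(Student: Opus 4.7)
The plan is to observe that the matrix $V = D^{-1}B$ has rows that are unit vectors, so its Frobenius norm is easily computed, and then combine this with the preceding Proposition.

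More explicitly, I would first note that since $D=\diag(||b_1||,\ldots,||b_n||)$ and $B=DV$, the $i$-th row of $V$ equals $b_i/||b_i||$, which has Euclidean length $1$. Therefore
\[
||V||^2 = \sum_{i=1}^n \sum_j |V_{ij}|^2 = \sum_{i=1}^n \left\| \frac{b_i}{||b_i||}\right\|^2 = n,
\]
so $||V||=\sqrt{n}$.

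Next, by the previous proposition, $||V^{-1}||^2 = S(B)$. Combining these two identities,
\[
\kappa(V)^2 = ||V||^2 \cdot ||V^{-1}||^2 = n\cdot S(B),
\]
and dividing by $n$ gives the claimed equality $S(B) = \kappa(V)^2/n$.

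There is essentially no obstacle here: the only content beyond the proposition is the observation that the row-normalization step forces $V$ to have Frobenius norm exactly $\sqrt{n}$, after which the corollary follows from the definition of the condition number.
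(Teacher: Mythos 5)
Your proof is correct and is exactly the argument the paper intends: the paper leaves the corollary without proof, and the only ingredient beyond the preceding proposition is your observation that the rows of $V=D^{-1}B$ are unit vectors, so $||V||^2=n$ and hence $\kappa(V)^2=||V||^2\,||V^{-1}||^2=n\,S(B)$.
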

By defining the matrix $U$ as $U=VV^t$, then $BB^t=DUD$, where $D$ is
as above, and if $\theta_{ij}$ is the angle between $b_i$ and $b_j$,
then $U=(\cos \theta_{ij})_{ij}$. The matrix $U$ is a symmetric
positive definite matrix, and the eigenvalues
$\lambda_1,\ldots,\lambda_n$ of $U$ are real positive.
\begin{co}
With the above notation, we have $S(B)= \tr (U^{-1}) = \sum_i
\frac{1}{\lambda_i}$.
\end{co}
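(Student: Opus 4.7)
The plan is to reduce everything to the previous proposition, which already gives $S(B)=\|V^{-1}\|^2$ in the Frobenius norm, and then to translate that norm into a trace expression involving $U=VV^t$.

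First I would rewrite the Frobenius norm as a trace: $\|V^{-1}\|^2 = \tr\bigl(V^{-1}(V^{-1})^t\bigr)$. Using $(V^{-1})^t=(V^t)^{-1}$ together with the cyclicity of the trace, this becomes
\[
\tr\bigl(V^{-1}(V^t)^{-1}\bigr) = \tr\bigl((V^t)^{-1}V^{-1}\bigr) = \tr\bigl((VV^t)^{-1}\bigr) = \tr(U^{-1}),
\]
which establishes the first equality $S(B)=\tr(U^{-1})$. This is the only real computational step, and it amounts to one application of cyclicity plus the identity $(AB)^{-1}=B^{-1}A^{-1}$.

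For the second equality I would invoke the spectral theorem. Since $U=VV^t$ is real symmetric and $V$ is invertible, $U$ is symmetric positive definite, so there exists an orthogonal matrix $O$ with $U = O^t \diag(\lambda_1,\ldots,\lambda_n)\,O$ and $\lambda_i>0$. Then $U^{-1}=O^t\diag(1/\lambda_1,\ldots,1/\lambda_n)O$, and taking the trace (which is basis-independent) yields $\tr(U^{-1}) = \sum_i 1/\lambda_i$.

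There is no real obstacle here; the statement is essentially a trace-identity corollary of the preceding proposition. The only point of care is not to confuse $VV^t$ with $V^tV$ when juggling inverses and transposes inside the trace, but the cyclicity of $\tr$ makes either ordering give the same answer.
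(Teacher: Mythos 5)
Your proposal is correct and follows the route the paper intends: the corollary is an immediate consequence of the proposition $S(B)=\|V^{-1}\|^2$, obtained by writing the Frobenius norm as $\tr\bigl(V^{-1}(V^{-1})^t\bigr)$ and using cyclicity of the trace to pass to $\tr\bigl((VV^t)^{-1}\bigr)=\tr(U^{-1})$, with the eigenvalue formula then following from the spectral theorem for the symmetric positive definite matrix $U$. Your care about $VV^t$ versus $V^tV$ is well placed but, as you note, harmless, since the trace is insensitive to the ordering.
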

From the equality $BB^t=DUD$, we have  $\left( \vol L \right)^2 = \det
U \cdot \prod_i ||b_i||^2$ which in turn leads to
\begin{equation}\label{eq}
 \prod_i ||b_i|| =  \left( \det U \right)^{-1/2} \cdot \vol L
 = \left( \prod_i \frac{1}{\lambda_i} \right)^{1/2} \cdot \vol L.
\end{equation}
The arithmetic-geometric mean inequality applied to the
$\lambda_i$'s, $\left( \prod_i 1/\lambda_i \right)^{1/n} \leq
\frac{1}{n} \sum_i 1/\lambda_i$, immediately gives the inequality
\[
 \prod_i ||b_i|| \leq \left( \frac{1}{n} \sum_i \frac{1}{\lambda_i}
 \right)^{\frac{n}{2}} \cdot \vol L = \left( \frac{S(B)}{n}
 \right)^{\frac{n}{2}} \cdot \vol L.
\] 

\noindent However, we also have the equality $\sum_i \lambda_i = \tr U
= n$, which affords a slightly better upper bound for the geometric
mean. Indeed, the harmonic-geometric-arithmetic mean inequalities
applied to the $1/\lambda_i$'s imply that if $g=\left( \prod_i
1/\lambda_i \right)^{1/n}$, $h=\left( \frac{1}{n} \sum_i
\lambda_i\right)^{-1} = 1$ and $a = \frac{1}{n} \sum_i
\frac{1}{\lambda_i} = \frac{S(B)}{n}$, then we have $h\leq g \leq a$,
but we also have the following result, which is \cite[Corollary
  3.1]{maze}.
\begin{lem}
With the above notations, if $\alpha=1/n$, we have
\[
g \leq \left(
\frac{a-h(1-2\alpha)-\sqrt{(a-h)(a-h(1-2\alpha)^2)}}{2\alpha}\right)^{\alpha}
\left(
\frac{a+h(1-2\alpha)+\sqrt{(a-h)(a-h(1-2\alpha)^2)}}{2(1-\alpha)}\right)^{1-\alpha}.
\]
\end{lem}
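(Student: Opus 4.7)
The strategy is to interpret the statement as a sharp reverse AM--GM inequality under a fixed harmonic-mean constraint: with $x_i := 1/\lambda_i > 0$, the numbers $a$, $h = 1$, $g$ are exactly the arithmetic, harmonic, and geometric means of $(x_1,\dots,x_n)$ (the identity $h = 1$ following from $\sum_i \lambda_i = n$), and we need the sharp upper bound on $g$ in terms of $a$ and $h$. I would begin by solving the extremal problem of maximizing $\prod_i x_i$ subject to fixed AM and HM via Lagrange multipliers. The stationarity condition for the Lagrangian $\sum \log x_i - \mu_1 \sum x_i - \mu_2 \sum 1/x_i$ reduces, after multiplication by $x_i^2$, to $\mu_1 x_i^2 - x_i - \mu_2 = 0$, a quadratic in $x_i$. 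Thus every interior critical point is a two-point configuration: each $x_i$ equals one of two values $X \le Y$, with exactly $k$ coordinates equal to $X$ for some $k \in \{1, \dots, n-1\}$, and I set $\alpha := k/n$.

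At such a two-point configuration, the constraints become
\[
\alpha X + (1-\alpha) Y = a, \qquad \frac{\alpha}{X} + \frac{1-\alpha}{Y} = \frac{1}{h}.
\]
Eliminating $Y = (a - \alpha X)/(1-\alpha)$ produces the quadratic $\alpha X^2 + (h(1-2\alpha) - a)X + \alpha a h = 0$, whose discriminant expands after routine bookkeeping to $(a - h)(a - h(1-2\alpha)^2)$. Choosing $X$ as the smaller root and $Y$ as the larger yields precisely the expressions stated in the lemma, and the value of the geometric mean at this configuration is $X^\alpha Y^{1-\alpha}$.

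The final step, which is the main obstacle, is to show that among the admissible weights $\alpha \in \{1/n, 2/n, \dots, (n-1)/n\}$ the value $\alpha = 1/n$ is the maximizer of $F(\alpha) := X(\alpha)^\alpha Y(\alpha)^{1-\alpha}$. The involution $\alpha \leftrightarrow 1 - \alpha$ exchanges $X$ and $Y$ and preserves $F$, so it suffices to prove monotonicity of $F$ on $(0, 1/2]$. I would carry this out by computing $\frac{d}{d\alpha}\log F$ explicitly, obtaining the implicit derivatives $X'(\alpha)$ and $Y'(\alpha)$ from the quadratic, and verifying that the resulting rational expression in $\alpha$, $a$, $h$ has a fixed sign on the interval. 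This monotonicity verification is precisely the content of \cite[Corollary 3.1]{maze}, and the present lemma is simply the specialization of that corollary to $h = 1$, $a = S(B)/n$, $\alpha = 1/n$ dictated by the lattice setting.
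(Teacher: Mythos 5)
The paper does not actually prove this lemma: it is imported verbatim as Corollary~3.1 of the cited Maze--Wagner note, so there is no internal argument to compare against. Your reconstruction of where such a proof must go is broadly right in its first two stages: the feasible set $\{x_i>0:\sum x_i=na,\ \sum 1/x_i=n/h\}$ is compact, the Lagrange condition does force every critical point to take at most two distinct values, and your elimination to the quadratic $\alpha X^2+(h(1-2\alpha)-a)X+\alpha ah=0$ with discriminant $(a-h)(a-h(1-2\alpha)^2)$ is correct, as is the identification of the lemma's right-hand side with $X_-^{\alpha}Y^{1-\alpha}$ for the configuration having $k=\alpha n$ coordinates at the smaller root.

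The genuine gap is in your last step. The claimed symmetry ``the involution $\alpha\leftrightarrow 1-\alpha$ exchanges $X$ and $Y$ and preserves $F$'' is false. Replacing $\alpha$ by $1-\alpha$ sends the configuration with $k$ coordinates at the \emph{smaller} root to the one with $n-k$ coordinates at the \emph{smaller} root of the new quadratic, which is a different critical configuration (it corresponds to taking $X_+$ rather than $X_-$ at the original weight), and the two have different geometric means. Concretely, for $n=3$ and $(x_1,x_2,x_3)=(1,1,4)$ one has $a=2$, $h=4/3$, and the critical configurations are $(2/3,8/3,8/3)$ and $(1,1,4)$, giving $F(1/3)=(128/27)^{1/3}\approx 1.68$ but $F(2/3)=4^{1/3}\approx 1.59$. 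So you cannot reduce to monotonicity on $(0,1/2]$ by symmetry; what is actually needed (and what makes the $\alpha=1/n$ choice correct) is that $F$ decreases on all of $(0,1)$, interpolating from $F(0^+)=a$ down to $F(1^-)=h$. That monotonicity is the entire analytic content of the lemma, and your proposal does not establish it but instead defers it to \cite[Corollary 3.1]{maze} --- which is the very statement being proved, so as a standalone argument the proposal is circular at its crux, even though as a reading of the paper (which likewise only cites that corollary) it correctly locates where the work lives.
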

This leads to the following inequality:

\begin{pr}
With the above notation, we have
\begin{equation}\label{eq2}
 \prod_i ||b_i|| \leq e^{1/2} \cdot \left(\frac{S(B)+1}{n}
 \right)^{\frac{n-1}{2}} \cdot \vol L.
\end{equation}
\end{pr}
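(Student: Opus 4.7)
The plan is to apply the preceding lemma to the positive reals $1/\lambda_1,\ldots,1/\lambda_n$. Since $U$ has unit diagonal, $\sum_i \lambda_i = \tr U = n$, so the harmonic mean of the reciprocals is $h = n/\sum_i\lambda_i = 1$, while their arithmetic mean is $a = S(B)/n$. Taking $\alpha = 1/n$ in the lemma produces an explicit upper bound on the geometric mean $g = \bigl(\prod_i 1/\lambda_i\bigr)^{1/n}$. Combined with equation (\ref{eq}), which reads $\prod_i ||b_i|| = g^{n/2}\cdot\vol L$, this reduces the proposition (after raising to the $n/2$-th power) to the scalar inequality
\[
g^n \leq e\cdot\Bigl(a+\tfrac{1}{n}\Bigr)^{n-1}.
\]

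Setting $x=a-1\geq 0$ and $\Delta=\sqrt{x(x+4\alpha(1-\alpha))}$, the lemma's estimate takes the form $g\leq y_1^{1-\alpha}\,y_2^{\alpha}$ with
\[
y_1=\frac{x+2(1-\alpha)+\Delta}{2(1-\alpha)},\qquad y_2=\frac{x+2\alpha-\Delta}{2\alpha}.
\]
A direct verification shows that $y_1,y_2$ are characterized by the side conditions $(n-1)y_1+y_2=na$ and $(n-1)/y_1+1/y_2=n$, i.e.\ they are precisely the two positive numbers of the extremal two-point configuration (with $n-1$ copies of $y_1$ and one copy of $y_2$). The clean substitution $y_1=1+z$ with $z>-1/n$ then forces $y_2=(1+z)/(nz+1)$ via the harmonic-mean constraint and gives the compact identities
\[
y_1^{n-1}\,y_2=\frac{(1+z)^n}{nz+1},\qquad a+\frac{1}{n}=\frac{n+1}{n}+\frac{(n-1)z^2}{nz+1}.
\]

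The main obstacle is verifying the resulting single-variable inequality
\[
\frac{(1+z)^n}{nz+1}\leq e\cdot\biggl(\frac{n+1}{n}+\frac{(n-1)z^2}{nz+1}\biggr)^{\!n-1},\qquad z>-\frac{1}{n}.
\]
At $z=0$ the left-hand side equals $1$ while the right-hand side equals $e\bigl((n+1)/n\bigr)^{n-1}>e$, so there is substantial slack near the origin. For $|z|\to\infty$ both sides grow like $z^{n-1}$ and the ratio of leading coefficients works out to $n^{n-2}/(n-1)^{n-1}$, which is bounded above by $e$ for every $n\geq 2$ (this is the arithmetic fact that makes the constant $e^{1/2}$ sharp enough to work). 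The intermediate range can be handled by clearing the denominator $(nz+1)^{n-1}$ to obtain a polynomial inequality in $z$, and then checking that the log-ratio of the two sides has no interior maximum exceeding $1$; this reduces to a routine calculus exercise.
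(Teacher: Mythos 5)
Your setup coincides with the paper's: apply the HGA lemma to the $1/\lambda_i$ with $h=1$, $a=S(B)/n$, $\alpha=1/n$, and reduce via Equation (\ref{eq}) to showing $g^n\leq e\,(a+\tfrac1n)^{n-1}$. Your algebra is also sound: the identification of the lemma's two brackets as the extremal pair $y_1,y_2$ with $(1-\alpha)y_1+\alpha y_2=a$ and $(1-\alpha)/y_1+\alpha/y_2=1$, the substitution $y_1=1+z$, and the two identities $y_1^{n-1}y_2=(1+z)^n/(nz+1)$ and $a+\tfrac1n=\tfrac{n+1}{n}+\tfrac{(n-1)z^2}{nz+1}$ all check out. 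But the proof then stops exactly where the actual work begins: the single-variable inequality
\[
\frac{(1+z)^n}{nz+1}\;\leq\; e\cdot\Bigl(\tfrac{n+1}{n}+\tfrac{(n-1)z^2}{nz+1}\Bigr)^{n-1}
\]
is only checked at $z=0$ and as $z\to\infty$, and the entire intermediate range is dismissed as ``a routine calculus exercise.'' Endpoint and asymptotic comparisons do not establish an inequality on $(0,\infty)$; ruling out an interior crossing for all $n$ simultaneously is precisely the content of the proposition, and nothing in your write-up does it. (A smaller point: since $a\geq h$ forces $y_1\geq 1$, only $z\geq 0$ is relevant, so working on $z>-1/n$ makes the unproved claim strictly harder than necessary.)

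The paper avoids this dead end by never computing $y_1,y_2$ exactly. Writing $\Delta=\sqrt{(a-h)(a-h(1-2\alpha)^2)}$, it uses $(a-h)^2\leq\Delta^2\leq(a-h(1-2\alpha)^2)^2$ to bound $\Delta$ \emph{below} by $a-h$ inside the first bracket (which collapses that factor to $h^{1/n}=1$) and \emph{above} by $a-h(1-2\alpha)^2$ inside the second; after simplification this yields
\[
g^n\;\leq\;\Bigl(1+\tfrac{1}{n-1}\Bigr)^{n-1}\Bigl(a+\tfrac1n-\tfrac{2}{n^2}\Bigr)^{n-1}\;<\;e\,\Bigl(a+\tfrac1n\Bigr)^{n-1},
\]
with no further analysis required. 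If you want to salvage your exact reparametrization, you must actually prove the displayed one-variable inequality for all $n\geq 2$ and $z\geq 0$; otherwise, replace that step by the two-sided bound on the square root as above.
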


\begin{proof}
Since $(1-2/n)^2\leq 1$, we have
\[
(a-h)^2 \leq (a-h)(a-h(1-2/n)^2) \leq (a-h(1-2/n)^2)^2 
\]
and thus the upper bound of the previous Lemma  gives
\begin{eqnarray*}
g & \leq & \left(
\frac{a-h(1-2/n)-(a-h)}{2/n}\right)^{1/n}
\left(
\frac{a+h(1-2/n)+(a-h(1-2/n)^2)}{2(1-1/n)}\right)^{1-1/n}.
\end{eqnarray*}
After suitable simplification, we obtain
\begin{eqnarray*}
g & \leq & a \cdot \left(\frac{h}{a}\right)^{1/n} \cdot
\left(1+\frac{h}{a}\cdot\left(1-\frac{2}{n}\right) \cdot \frac{1}{n}
\right)^{1-1/n} \cdot \left(1+\frac{1}{n-1}\right)^{1-1/n}.
\end{eqnarray*}
Since $(1+\frac{1}{n-1})^{n-1}<e$, taking the $n$-th power of both
sides of the previous inequality gives
\[
\prod_i 1/\lambda_i < e \cdot \left(\frac{S(B)+1-\frac{2}{n}}{n}
\right)^{n-1} <  e \cdot \left(\frac{S(B)+1}{n}
\right)^{n-1} .
\]
The result follows by applying the previous inequality to Equation
(\ref{eq}).
\end{proof}
This is an improvement by a factor of roughly $n^{n/2}$ of the bound
given by (\ref{size_bound}), and can be used to strengthen the bound
of the orthogonality defect (\ref{zhang1}):
\begin{co}
With the above notations, we have
\[
\ode(B) \leq  1- \frac{1}{e}\left(\frac{n}{S(B)+1}\right)^{n-1}
\]
\end{co}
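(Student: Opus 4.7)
The plan is to derive this bound as an essentially immediate consequence of the preceding proposition (inequality \ref{eq2}), combined with the definition of the orthogonality defect.

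First I would square the inequality of the proposition to get
\[
\prod_{i=1}^n \|b_i\|^2 \;\leq\; e \cdot \left(\frac{S(B)+1}{n}\right)^{n-1} \cdot (\vol L)^2.
\]
Since $(\vol L)^2 = \det BB^t$ by definition, this rearranges to
\[
\frac{\det BB^t}{\prod_{i=1}^n \|b_i\|^2} \;\geq\; \frac{1}{e}\left(\frac{n}{S(B)+1}\right)^{n-1}.
\]

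Next I would substitute this into the definition $\od(B) = 1 - \frac{\det BB^t}{\prod_i \|b_i\|^2}$ and reverse the inequality (since we are subtracting a lower-bounded quantity from $1$), yielding exactly the claimed upper bound on $\od(B)$.

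There is no serious obstacle here; the corollary is a packaging of the previous proposition into the orthogonality-defect form, parallel to how inequality \ref{zhang2} was derived from \ref{zhang1} in the introduction. The only point worth flagging is that the sign flip is handled correctly (a lower bound on the ratio $\det BB^t / \prod_i \|b_i\|^2$ becomes an upper bound on $\od(B)$), and that the proposition is applied without any hypothesis beyond $B$ being a basis of $L$, so the corollary holds in the same generality.
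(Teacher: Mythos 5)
Your proof is correct and matches the paper's (implicit) derivation exactly: the paper states this corollary without proof, treating it as an immediate consequence of squaring inequality (\ref{eq2}), substituting $(\vol L)^2 = \det BB^t$, and applying the definition of $\od(B)$. The sign reversal is handled correctly and no further argument is needed.
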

Combining the previous proposition with the explicit bound of
Proposition \ref{explicit}, we have the following proposition:
\begin{pr}
For every lattice $L$, if $B=[b_1|\ldots|b_n]^t$ is a Seysen
reduced basis, then
\begin{equation*}
\min_i ||b_i||
 \leq \exp \left( \left(\frac{1}{\ln 2}+\frac{1}{2}\right) (\ln n)^2
+ O(\ln n) \right)  \cdot \left( \vol L \right)^{1/n}.
\end{equation*}
\end{pr}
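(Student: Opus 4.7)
The plan is to chain together the two main results established earlier in the paper: the explicit upper bound on $S(B)$ for a Seysen reduced basis (Proposition \ref{explicit}) and the inequality (\ref{eq2}) relating the product $\prod_i \|b_i\|$ to $S(B)$ and $\vol L$. Since $B$ is Seysen reduced, $S(B)$ is minimal over all bases of $L$, so by Proposition \ref{explicit} we have
\[
S(B) \leq \exp\!\left(\left(\tfrac{2}{\ln 2}+1\right)(\ln n)^2 + 4\ln n\right).
\]

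Next, I would use the elementary bound $\min_i \|b_i\| \leq \bigl(\prod_i \|b_i\|\bigr)^{1/n}$ together with inequality (\ref{eq2}) from the proposition preceding the corollary, giving
\[
\min_i \|b_i\| \leq e^{1/(2n)} \cdot \left(\frac{S(B)+1}{n}\right)^{(n-1)/(2n)} \cdot (\vol L)^{1/n}.
\]
Taking logarithms reduces the problem to bounding $\tfrac{n-1}{2n}\ln\!\bigl((S(B)+1)/n\bigr)$, with the $(n-1)/(2n)$ factor absorbing roughly a factor of $1/2$ and crucially dividing the exponent $\tfrac{2}{\ln 2}+1$ in the bound on $S(B)$ down to $\tfrac{1}{\ln 2}+\tfrac{1}{2}$, which is exactly the constant in the target inequality.

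The substitution and arithmetic are routine: $\ln(S(B)+1) = \ln S(B) + O(1)$, and the factor $\tfrac{n-1}{2n}$ differs from $1/2$ by $O(1/n)$, so the leading $(\ln n)^2$ term contributes $\bigl(\tfrac{1}{\ln 2}+\tfrac{1}{2}\bigr)(\ln n)^2$ while the remaining linear-in-$\ln n$ terms, the $e^{1/(2n)}$ prefactor, and the $-\tfrac{n-1}{2n}\ln n$ coming from dividing $S(B)+1$ by $n$ all get absorbed into the $O(\ln n)$ error. Exponentiating then yields the claimed bound.

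There is essentially no obstacle: the only point to watch is keeping track of which constants feed into which, in particular that the factor $(n-1)/(2n)$ in (\ref{eq2}) (rather than, say, $1/2$ exactly) does not spoil the leading constant since the discrepancy is $O((\ln n)/n) \subset O(\ln n)$ after multiplying by $(\ln n)^2$. Once this is noted, the proposition follows in a few lines from Proposition \ref{explicit} and inequality (\ref{eq2}).
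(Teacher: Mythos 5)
Your argument is correct and is precisely the one the paper intends: the paper states this proposition without a written proof, presenting it as the direct combination of Proposition \ref{explicit} (which applies to the Seysen reduced basis by minimality of $S$) with inequality (\ref{eq2}) via $\min_i \|b_i\| \leq \left(\prod_i \|b_i\|\right)^{1/n}$. Your tracking of how the exponent $\frac{2}{\ln 2}+1$ is halved to $\frac{1}{\ln 2}+\frac{1}{2}$ and of the terms absorbed into $O(\ln n)$ is accurate.
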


\section{Conclusion}

In this article, we gave an explicit upper bound for the constant
hidden inside Landau's notation of the original bound of the Seysen
measure \cite{seysen1}. We also developed the connection between the
Seysen measure and standard linear algebra concepts such as the
Frobenius norm and the condition number of a matrix. This allowed us
to improve known upper bounds for the Seysen measure and the
orthogonality defect.

%%%%%%%%%%%%%%%%%%%%%%%%%%%%%%%%%%%%%%%%%%%%%%%%%%%%%%%%%%%%%%%%%%%

\end{document}